\numberwithin{equation}{section}
\theoremstyle{plain}
\newtheorem{thm}{Theorem}
\theoremstyle{plain}
\newtheorem{lem}{Lemma}
\theoremstyle{plain}
\newtheorem*{theorema}{Theorem~A}
\theoremstyle{remark}
\newtheorem*{remark}{Remark}
\theoremstyle{definition}
\newtheorem*{casea}{Case~A}
\theoremstyle{definition}
\newtheorem*{caseb}{Case~B}
\theoremstyle{definition}
\newtheorem*{case1}{Case~1}
\theoremstyle{definition}
\newtheorem*{case2}{Case~2}
\theoremstyle{definition}
\theoremstyle{definition}
\theoremstyle{definition}
\def\bfiti{\boldsymbol{i}}
\def\eps{\varepsilon}
\def\Rr{\mathbb{R}}
\def\III{\mathcal{I}}
\def\PPP{\mathcal{P}}
\def\XXX{\mathcal{X}}
\def\YYY{\mathcal{Y}}
\def\frakI{\mathfrak{I}}
\def\LLLL{\mathscr{L}}
\renewcommand{\le}{\leqslant}
\renewcommand{\ge}{\geqslant}
\title[Superdensity and super-micro-uniformity]
{Superdensity and super-micro-uniformity\\
in non-integrable flat systems}
\author[Beck]{J. Beck}
\address{Department of Mathematics, Hill Center for the Mathematical Sciences,
Rutgers University, Piscataway NJ 08854, USA}
\email{jbeck@math.rutgers.edu}
\author[Chen]{W.W.L. Chen}
\address{School of Mathematical and Physical Sciences, Faculty of Science and Engineering,
Macquarie University, Sydney NSW 2109, Australia}
\email{william.chen@mq.edu.au}
\keywords{geodesics, billiards, density, uniformity}
\subjclass[2010]{11K38, 37E35}
\begin{document}

\begin{abstract}
We show that on any non-integrable finite polysquare translation surface, superdensity,
an optimal form of time-quantitative density, leads to an optimal form of time-quantitative uniformity
that we call super-micro-uniformity.
\end{abstract}

\maketitle

\thispagestyle{empty}

%
%

\section{Introduction}\label{sec1}

Consider a half-infinite geodesic on a finite polysquare translation surface.
It is trivial that uniformity always implies density, and that the converse is false.
However, while density does not in general imply uniformity, we demonstrate here an interesting case
when some form of time-quantitative density implies some form of time-quantitative uniformity.

The purpose of the present paper is to show how superdensity, an optimal form of time-quantitative density,
implies an optimal form of time-quantitative uniformity that we call \textit{super-micro-uniformity}.
Here \textit{super} refers to optimality and \textit{micro} refers to microscopic scale.

To illustrate the latter, consider the irrational rotation sequence
\begin{equation}\label{eq1.1}
\{q\alpha\},
\quad
q=1,2,3,\ldots,
\end{equation}
of fractional parts of $q\alpha$ in the interval $[0,1)$, where $\alpha$ is irrational.
Let $I\subset[0,1)$ be an arbitrary subinterval of length~$1/2n$,
and consider the first $n$ elements of the sequence \eqref{eq1.1}.
Then the \textit{expected number} of elements of this $n$-element set in $I$ is clearly~$1/2$,
corresponding to $n$ times the length of~$I$.
On the other hand, the \textit{visiting number} $V_n(I)$ of~$I$, the actual number of elements in $I$
coming from this $n$-element set is clearly an integer, and so must differ from the expected number by at least~$1/2$.
We refer to this as the \textit{trivial error}.
Indeed, we have same phenomenon if the length of $I$ is~$C/n$, where $2C$ is an odd integer.
Here the error is at least~$1/2$, and the expected number $C$ is in the constant range.

Given the first $n$ elements of the infinite sequence \eqref{eq1.1},
intervals of length $C/n$ represent test sets in the microscopic scale.
Here $C>0$ is a fixed constant, and $n$ may tend to infinity.
The trivial error argument above implies that in the microscopic scale of~$C/n$,
we cannot expect \textit{perfect local uniformity} in the sense that the ratio of the error term
and the expected number tends to zero as $C$ is fixed and $n$ tends to infinity.
To put it slightly differently, to have perfect local uniformity, it is necessary to have $C=C(n)\to\infty$ as $n\to\infty$.

It turns out that this necessary condition is sufficient to establish perfect local uniformity if $\alpha$ is badly approximable.
This perfect local uniformity is what we call super-micro-uniformity.
It has the intuitive meaning that the orbit exhibits uniformity already in the shortest possible subintervals.
We have the following result on super-micro-uniformity of the irrational rotation sequence generated by a badly approximable~$\alpha$.

\begin{theorema}
Let $\alpha$ be a badly approximable real number.
For any  subinterval $I\subset[0,1)$, let
\textcolor{white}{xxxxxxxxxxxxxxxxxxxxxxxxxxxxxx}
\begin{displaymath}
V_n(I)=\vert\{q=1,\ldots,n:\{q\alpha\}\in I\}\vert
\end{displaymath}
denote the visiting number of $I$ with respect to the first $n$ terms of the sequence \eqref{eq1.1}.
Then for every sufficiently large integer $n$ and
every real number $\eps>0$, there exists a finite threshold $C_\eps=C_\eps(\alpha)$
satisfying $1<C_\eps<n$
such that for any  subinterval $I$ with length $\vert I\vert\ge C_\eps/n$, the inequality
\begin{equation}\label{eq1.2}
\vert V_n(I)-n\vert I\vert\vert<\eps n\vert I\vert
\end{equation}
holds.
\end{theorema}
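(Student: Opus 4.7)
I would combine the Denjoy--Koksma inequality applied at convergent denominators with the Ostrowski numeration adapted to the continued fraction expansion of $\alpha$, refined by a scale-sensitive analysis. Since $\alpha$ is badly approximable, the partial quotients $a_j$ of $\alpha=[a_0;a_1,a_2,\ldots]$ are bounded by some integer $M$, and therefore the convergent denominators satisfy $q_{j+1}\le(M+1)q_j$.

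\textbf{Core estimate.} The Denjoy--Koksma inequality states that for any interval $I\subset[0,1)$, any starting point $x$, and any convergent denominator $q_j$,
\[
\bigl|\#\{0\le k<q_j:\{x+k\alpha\}\in I\}-q_j|I|\bigr|\le 2,
\]
which reflects the fact that $\chi_I-|I|$ has total variation $2$, together with the near-uniform distribution of the first $q_j$ orbit points given by the three-distance theorem.

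\textbf{Decomposition and naive bound.} For general $n$, I would use the Ostrowski expansion $n=\sum_{j=0}^{s}c_j q_j$ with $0\le c_j\le a_{j+1}\le M$ and $s=O(\log n)$. This splits $\{k\alpha\}_{k=1}^n$ into $\sum_j c_j$ consecutive blocks, each a translate of $\{k\alpha\}_{k=0}^{q_j-1}$ for some level $j$. Applying the core estimate block by block yields only the classical bound $|V_n(I)-n|I||=O(\log n)$; but for $|I|=C_\eps/n$ this gives relative error $O((\log n)/C_\eps)$, which depends on~$n$, so the naive bound is insufficient.

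\textbf{Scale-sensitive refinement.} The heart of the argument is to upgrade the classical bound to
\[
\bigl|V_n(I)-n|I|\bigr|\le K(\alpha)\bigl(1+\log(2+n|I|)\bigr),
\]
whose right-hand side depends on the expected count $n|I|$ rather than on $n$. I would partition the Ostrowski levels into a \emph{coarse} part $q_j\le 1/|I|$ and a \emph{fine} part $q_j>1/|I|$. The fine part contains $O(\log(n|I|))$ levels, each contributing $O(1)$ to the discrepancy by the core estimate. The coarse part contains $O(\log(1/|I|))$ levels, but each coarse block contains at most one orbit point in $I$; tracking how the several shifted blocks at each coarse level interact with $I$, together with the fact that the shift increments $q_j\alpha\approx\pm 1/q_{j+1}$ themselves form a rotation of bounded type, yields a total coarse contribution of $O(1)$ via a telescoping argument. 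Combining both parts gives the advertised logarithmic bound.

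\textbf{Conclusion.} Given $\eps>0$, I would then choose $C_\eps=C_\eps(\alpha)$ large enough that
\[
K(\alpha)\bigl(1+\log(2+C_\eps)\bigr)<\eps\,C_\eps,
\]
which is always possible since $(\log T)/T\to 0$ as $T\to\infty$. For any $n$ and any interval $I$ with $|I|\ge C_\eps/n$, the hypothesis $n|I|\ge C_\eps$ together with the refined bound yields $|V_n(I)-n|I||<\eps\,n|I|$, as required.

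\textbf{Principal obstacle.} The main difficulty is the scale-sensitive refinement, specifically the telescoping step at coarse levels which reduces the naive $O(\log(1/|I|))$ coarse contribution to $O(1)$. This is the point at which the bounded partial quotients enter in a genuinely essential way, since they guarantee that the secondary rotations arising in the recursion are themselves of bounded type and hence amenable to a second application of Denjoy--Koksma.
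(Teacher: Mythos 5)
Your proposal follows the same route as the paper's outline: Ostrowski numeration plus the near-perfect equidistribution of blocks of length $q_j$ (Denjoy--Koksma), with the added --- and correct --- observation that the naive $O(\log n)$ discrepancy bound is useless at this scale and must be upgraded to the scale-sensitive bound $O(\log(n|I|))$. That refinement, together with the coarse/fine split of Ostrowski levels at the scale $q_j\approx 1/|I|$, is precisely the ``suitable way'' that the paper's two-idea sketch leaves implicit, so you have identified the right key lemma.

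The one soft spot is the coarse part. Your ``telescoping'' argument is only gestured at, and as described it is not clearly valid: you have $O(\log(1/|I|))$ coarse levels, each of which can contribute up to a point of $I$ per block, and the appeal to a secondary bounded-type rotation generated by the increments $q_j\alpha$ is both delicate and unnecessary. There is a much more direct route. The coarse blocks taken together are exactly the \emph{last} $N_c=\sum_{q_j|I|<1}c_jq_j$ orbit points; since the $q_j$ grow at least geometrically and the Ostrowski digits are bounded by $M$, one has $N_c=O(1/|I|)$, so the expected coarse count $N_c|I|$ is $O(1)$. On the other hand these are $N_c$ \emph{consecutive} orbit points, and for $\alpha$ of bounded type the minimum gap among $N_c$ consecutive orbit points is at least $c(\alpha)/N_c \gg |I|$, so at most $O(1)$ of them can lie in $I$. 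Hence the coarse error is $O(1)$ outright, with no telescoping needed. With that replacement the rest of your argument --- fine error $O(\log(n|I|))$ over the $O(\log(n|I|))$ fine levels, then choose $C_\eps$ with $K(\alpha)(1+\log(2+C_\eps))<\eps C_\eps$ --- closes cleanly.
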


The proof of this result is a fairly routine exercise using continued fractions, so Theorem~A is very possibly folklore.
However, as we shall establish a more general result, we briefly outline the ideas here.

First of all, we recall that the convergents $p_k/q_k$ of $\alpha$ give excellent rational approximation, in the sense that
\begin{displaymath}
\left\vert\alpha-\frac{p_k}{q_k}\right\vert<\frac{1}{q_kq_{k+1}},
\end{displaymath}
so that
\textcolor{white}{xxxxxxxxxxxxxxxxxxxxxxxxxxxxxx}
\begin{displaymath}
\left\vert q\alpha-\frac{qp_k}{q_k}\right\vert<\frac{q}{q_kq_{k+1}}\le\frac{1}{q_{k+1}},
\quad
q=1,\ldots,q_k.
\end{displaymath}
Hence any segment of $q_k$ consecutive terms of the sequence \eqref{eq1.1}
is very uniformly distributed in the interval $[0,1)$.

To take advantage of this, it makes sense to look at the Ostrowski decomposition of integers,
using the denominators of the convergents.
For every integer~$N$, we can write
\textcolor{white}{xxxxxxxxxxxxxxxxxxxxxxxxxxxxxx}
\begin{displaymath}
N=\sum_{i=0}^mb_kq_k,
\end{displaymath}
where $m$ is the unique integer satisfying $q_m\le N<q_{m+1}$, and the digits $b_0,b_1,\ldots,b_m$ satisfy
\textcolor{white}{xxxxxxxxxxxxxxxxxxxxxxxxxxxxxx}
\begin{displaymath}
\begin{array}{ll}
b_0\in\{0,1,\ldots,a_1-1\},
\vspace{2pt}
\\
b_k\in\{0,1,\ldots,a_{k+1}\},
&
k=1,\ldots,m,
\vspace{2pt}
\\
b_{k-1}=0\mbox{ if }b_k=a_{k+1},
&
k=1,\ldots,m.
\end{array}
\end{displaymath}
where $a_1,\ldots,a_{m+1}$ are continued fraction digits of~$\alpha$.

Theorem~A follows on combining these two ideas in a suitable way.

From the discrete super-micro-uniformity given by \eqref{eq1.2}, it is easy to deduce that every half-infinite geodesic,
\textit{i.e.} torus line, of badly approximable slope $\alpha$ is super-micro-uniform in the unit torus $[0,1)^2$.

Note that geodesics on the unit torus $[0,1)^2$ is the simplest integrable system.
If we consider geodesic flow on an arbitrary finite polysquare translation surface, then it is typically non-integrable.

\begin{thm}\label{thm1}
Let $\PPP$ be a polysquare translation surface with $b$ atomic squares, and let $\alpha$ be a badly approximable real number.
Let $L_\alpha(t)$, $t\ge0$, be a half-infinite geodesic with slope~$\alpha$, equipped with the usual arc-length parametrization.
For any positive integer~$n$, let $\XXX_n$ denote the set of the first $n$ intersection points of $L_\alpha(t)$, $t\ge0$,
with the vertical edges of~$\PPP$, and for any  subinterval $I$ of any vertical edge of~$\PPP$, let
\begin{displaymath}
V_n(I)=\vert I\cap\XXX_n\vert
\end{displaymath}
denote the visiting number of $I$ with respect to~$\XXX_n$.
Then for every sufficiently large integer $n$ and every real number $\eps>0$,
there exists a finite threshold $C_\eps=C_\eps(\PPP;\alpha)$
satisfying $1<C_\eps<n$
such that for any  subinterval $I$ of any vertical edge of
$\PPP$ with length $\vert I\vert\ge C_\eps/n$, the inequality
\begin{displaymath}
\left\vert V_n(I)-\frac{n\vert I\vert}{b}\right\vert<\eps\frac{n\vert I\vert}{b}
\end{displaymath}
holds.
In other words, we have super-micro-uniformity.
\end{thm}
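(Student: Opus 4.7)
The plan is to reduce Theorem~\ref{thm1} to Theorem~A by exploiting the fact that on a translation surface, horizontal crossings preserve the local vertical coordinate. Under arc-length parametrization, consecutive intersections of $L_\alpha(t)$ with vertical edges of $\PPP$ are equally spaced in $t$ (spacing $\sqrt{1+\alpha^2}$), and between two consecutive hits the local vertical coordinate advances by exactly $\alpha$ modulo~$1$; intermediate horizontal-edge crossings only introduce wraparound and do not disturb this. Hence, if we label each hit $P_k\in\XXX_n$ by its local height $y_k\in[0,1)$ on whichever vertical edge it lands on, the height sequence satisfies $y_k=\{y_0+k\alpha\}$ for some initial $y_0\in[0,1)$, \emph{independently} of how $\PPP$ is glued.

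Fix $\eps>0$ and a subinterval $I$ of a chosen vertical edge $E_j$, with $\vert I\vert\ge C_\eps/n$. Identifying $I$ with a subinterval of $[0,1)$ via the local coordinate, the visiting number $V_n(I)$ counts those $k\le n$ for which simultaneously (a) $y_k\in I$, and (b) the edge index $j(k)$ visited at step $k$ equals~$j$. Theorem~A, applied to $\{y_0+k\alpha\}$, gives that condition~(a) alone is satisfied by $n\vert I\vert(1+O(\eps))$ values of $k$, provided $C_\eps$ is chosen sufficiently large in terms of $\alpha$. The remaining task is to show that a fraction $1/b+O(\eps)$ of these indices also satisfies~(b).

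To handle this joint condition, I view the process $k\mapsto(j(k),y_k)$ as a skew product over the rotation by $\alpha$ on $[0,1)$, with finite fiber $\{1,\ldots,b\}$ and a combinatorial cocycle determined by the gluings of $\PPP$. The required microscopic joint equidistribution—conditioning on $y_k$ in a short interval does not bias the label $j(k)$—is obtained by combining superdensity of the geodesic on $\PPP$ with the Ostrowski decomposition of~$n$. Writing $n=\sum_{i=0}^{m}b_iq_i$ in terms of the convergent denominators $q_i$ of $\alpha$ partitions the first $n$ steps into blocks of sizes $b_iq_i$; on each block of $q_i$ consecutive terms, the rotation sequence is extremely uniformly distributed in $[0,1)$ (discrepancy at most $1/q_{i+1}$ on intervals of length $\ge 1/q_i$), while superdensity on the same block produces near-equal visits to the $b$ atomic squares and hence near-equal distribution of edge labels. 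Summing the per-block contributions couples the microscopic $y$-uniformity (Theorem~A) with the coarse edge-uniformity (superdensity) to give $V_n(I)=(n\vert I\vert/b)(1+O(\eps))$, which is what the theorem claims.

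The main obstacle is the quantitative coupling of these two distinct sources of uniformity: superdensity is a \emph{global} equidistribution statement across atomic squares, whereas what is required here is equidistribution across squares \emph{conditional} on $y_k$ lying in a microscopically small interval of length $\sim C_\eps/n$. The badly-approximable hypothesis, which furnishes a bound $q_{i+1}\le K(\alpha)q_i$, controls the Ostrowski block sizes uniformly and is precisely what makes this coupling go through: it ensures that each resolution scale $1/q_i$ relevant to $I$ is matched by a block of size $q_i$ on which superdensity is already effective, so that no block is either too short for square-equidistribution or too long to resolve~$I$.
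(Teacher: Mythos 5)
Your proposal starts from a valid observation — tracking the local height $y_k$ of the $k$th vertical-edge crossing does give a rotation sequence $y_k=\{y_0+k\alpha\}$, and $V_n(I)$ is indeed a count of those $k\le n$ with $y_k\in I$ and with the correct edge label $j(k)$. But the crucial step fails. You assert that ``superdensity on the same block produces near-equal visits to the $b$ atomic squares and hence near-equal distribution of edge labels'' over each Ostrowski block of size $q_i$, and that the badly-approximable hypothesis is ``precisely what makes this coupling go through.'' That is an assertion, not an argument, and it is essentially circular: superdensity is a \emph{density} statement — a segment of flow of length $c_0m$ gets $(1/m)$-close to every point of $\PPP$ — and it carries no quantitative information on the \emph{proportion} of time the orbit spends in each square, let alone on the conditional distribution of the label $j(k)$ given that $y_k$ lands in a prescribed microscopic interval of length $\sim C/n$. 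Near-equal distribution of visits across squares, conditioned on a tiny height window, is precisely the uniformity statement the theorem is trying to establish; you cannot import it as a consequence of superdensity without a new mechanism, and your outline supplies none. The skew-product picture you set up (rotation base, label fiber, gluing cocycle) is correct but does not help: for a non-integrable $\PPP$ the cocycle genuinely mixes height and label, and nothing in your argument shows that the fiber is equidistributed on the short blocks you consider.

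The paper's proof does not attempt any such direct conversion, and in fact carefully avoids the trap you fall into. It argues by contradiction: among all subintervals of length $C/n$ on vertical edges it fixes two, $I_0$ and $I_1$, realizing the minimum and maximum visiting numbers, and shows that if $V_n(I_0)/V_n(I_1)<1-\eps$ (Case~B) one gets a contradiction. Superdensity is used only as a transport device: flowing forward an unsplit subpiece of $I_0$ must, within a bounded number of vertex-splittings (controlled by Lemma~1 and the badly-approximable property), pass through $I_1$. Comparing the point counts inside the transported image against $I_0$ and $I_1$ then forces either a strict density decrease on the $I_0$ side or a strict density increase on the $I_1$ side (Cases~1 and~2), and an iteration of this step, with a careful bookkeeping lemma (Lemma~2) to keep the ratios integral, yields a geometric amplification that eventually violates the trivial bound $|I^{\star}\cap\XXX_n|\le An|I^{\star}|+1$. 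Case~A is then a short direct computation. None of this resembles your Ostrowski-block coupling, and the missing ingredient in your proposal — a quantitative link between superdensity and conditional label-equidistribution at scale $C/n$ — is exactly what the paper's iteration scheme is designed to replace.
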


The remainder of the paper is devoted to proving this result.

Needless to say, super-micro-uniformity implies traditional Weyl type uniformity with respect to all Jordan measurable test sets.

We require a superdensity result in our earlier papers \cite{BC1,BC2}.
Let $\PPP$ be a polysquare translation surface with $b$ atomic squares, and let $\alpha$ be a badly approximable real number.
Then there exists a finite superdensity threshold $c_0=c_0(\PPP;\alpha)$ such that for every integer $m\ge1$,
any geodesic segment of slope $\alpha$ and length $c_0m$ gets $(1/m)$-close to every point of~$\PPP$.

%
%

\section{Iteration process: step 0}\label{sec2}

Let $C$ be a constant satisfying $1<C<n$.
Let $\III_n(\PPP;C)$ denote the collection of all  subintervals $I$ of any vertical edge of $\PPP$ with length $\vert I\vert=C/n$,
and let $I_0,I_1\in\III_n(\PPP;C)$ be  subintervals satisfying
\begin{displaymath}
V_n(I_0)=\min_{I\in\III_n(\PPP;C)}\vert I\cap\XXX_n\vert
\quad\mbox{and}\quad
V_n(I_1)=\max_{I\in\III_n(\PPP;C)}\vert I\cap\XXX_n\vert,
\end{displaymath}
so that $I_0$ and $I_1$ have respectively the smallest and largest visiting numbers with respect to $\XXX_n$
among all the  subintervals $I$ under consideration.
It is clear that
\begin{equation}\label{eq2.1}
\vert I_0\cap\XXX_n\vert\le\frac{C}{b}\le\vert I_1\cap\XXX_n\vert.
\end{equation}
Let the real number $\eps$ satisfy $0<\eps<1/2$.
We have two cases:

\begin{casea}
We have
\textcolor{white}{xxxxxxxxxxxxxxxxxxxxxxxxxxxxxx}
\begin{equation}\label{eq2.2}
\frac{\vert I_0\cap\XXX_n\vert}{\vert I_1\cap\XXX_n\vert}\ge1-\eps.
\end{equation}
\end{casea}

\begin{caseb}
We have
\textcolor{white}{xxxxxxxxxxxxxxxxxxxxxxxxxxxxxx}
\begin{equation}\label{eq2.3}
\frac{\vert I_0\cap\XXX_n\vert}{\vert I_1\cap\XXX_n\vert}<1-\eps.
\end{equation}
\end{caseb}

We shall postpone the analysis of Case~A to Section~\ref{sec6}.

To complete the proof of Theorem~\ref{thm1}, we shall show that Case~B, where \eqref{eq2.3} holds, is not possible.
Indeed, we shall show that \eqref{eq2.3} leads to a contradiction.
The proof is rather long, and involves a complicated iteration process, with two possibilities at each step.
We shall derive the necessary contradiction by showing that at some stage of this process, neither possibility is valid.

We need the following number theoretic technical result.

\begin{lem}\label{lem1}
Suppose that $q_k$ is the denominator of a convergent of~$\alpha$,
and that $I$ is an interval of real numbers with length $\vert I\vert\le1/2q_k$.
Then at most one of the translated intervals
\textcolor{white}{xxxxxxxxxxxxxxxxxxxxxxxxxxxxxx}
\begin{equation}\label{eq2.4}
I+q\alpha,
\quad
q=1,\ldots,q_k,
\end{equation}
contains an integer.
\end{lem}

\begin{proof}
Consider the $q_k$ points $\{q\alpha\}$, $q=1,\ldots,q_k$.
It follows from a special case of the famous $3$-distance theorem \cite{sos57,sos58} that
the distance between two neighbouring points of this finite collection of numbers is at least
\begin{displaymath}
\Vert q_{k-1}\alpha\Vert\ge\frac{1}{q_k+q_{k-1}}>\frac{1}{2q_k}.
\end{displaymath}
This implies that if $\vert I\vert\le1/2q_k$,
then the $q_k$ translated intervals \eqref{eq2.4} are pairwise disjoint modulo~$1$,
so that at most one contains an integer.
\end{proof}

We also need the following counting result.

\begin{lem}\label{lem2}
Let $\alpha$ be a badly approximable number, and let $A$ be an upper bound on the continued fraction digits of~$\alpha$.
Consider a set
\begin{displaymath}
\YYY_m=\{\{\beta+q\alpha\}:q=1,\ldots,m\}\subset[0,1),
\end{displaymath}
where $m$ is a positive integer, $\beta\in\Rr$ is arbitrary and the interval $I^\star\subset[0,1)$.
Then
\begin{equation}\label{eq2.5}
\vert I^\star\cap\YYY_m\vert\le2(A+1)m\vert I^\star\vert+2.
\end{equation}
\end{lem}

\begin{proof}
Suppose that
\textcolor{white}{xxxxxxxxxxxxxxxxxxxxxxxxxxxxxx}
\begin{equation}\label{eq2.6}
q_{k-1}<m\le q_k,
\end{equation}
where $q_{k-1}$ and $q_k$ are the denominators of successive convergents of~$\alpha$.
We expand the set $\YYY_m$ to the set
\begin{displaymath}
\YYY_{q_k}=\{\{\beta+q\alpha\}:q=1,\ldots,q_k\}\subset[0,1),
\end{displaymath}
which has good distribution properties in $[0,1)$.
Clearly
\begin{equation}\label{eq2.7}
\vert I^\star\cap\YYY_m\vert\le\vert I^\star\cap\YYY_{q_k}\vert,
\end{equation}
so we need to find an upper bound for the right hand side.
Using a special case of the $3$-distance theorem,
we know that the distance between neighbouring points of the set $\YYY_{q_k}$ is equal to
\begin{equation}\label{eq2.8}
\Vert q_{k-1}\alpha\Vert
\quad\mbox{or}\quad
\Vert q_{k-1}\alpha\Vert+\Vert q_k\alpha\Vert<2\Vert q_{k-1}\alpha\Vert.
\end{equation}
Thus a generous upper bound is given by
\begin{equation}\label{eq2.9}
\vert I^\star\cap\YYY_{q_k}\vert\le2q_k\vert I^\star\vert+2,
\end{equation}
where the first factor $2$ covers for the different lengths \eqref{eq2.8} of the gaps between neighbouring points of~$\YYY_{q_k}$,
and the second factor $2$ covers for any error arising from the two endpoints of the interval~$I^\star$.
The estimate \eqref{eq2.5} now follows on combining \eqref{eq2.6}, \eqref{eq2.7}, \eqref{eq2.9} and
the trivial estimate $q_k<(A+1)q_{k-1}$.
\end{proof}

Let $q_k$ be the smallest convergent denominator such that
\begin{equation}\label{eq2.10}
q_k(1+\alpha^2)^{1/2}>\frac{6c_0n}{C}.
\end{equation}
Then
\textcolor{white}{xxxxxxxxxxxxxxxxxxxxxxxxxxxxxx}
\begin{displaymath}
q_{k-1}\le\frac{6c_0n}{C(1+\alpha^2)^{1/2}},
\end{displaymath}
and so
\textcolor{white}{xxxxxxxxxxxxxxxxxxxxxxxxxxxxxx}
\begin{equation}\label{eq2.11}
q_k\le(A+1)q_{k-1}\le\frac{6(A+1)c_0n}{C(1+\alpha^2)^{1/2}},
\end{equation}
where $A$ is an upper bound on the continued fraction digits of~$\alpha$.
We divide the interval $I_0$ into subintervals $\frakI$ of common length
\begin{equation}\label{eq2.12}
\vert\frakI\vert=\frac{1}{2q_k}<\frac{C(1+\alpha^2)^{1/2}}{12c_0n}\le\frac{C}{3n},
\end{equation}
provided that
\textcolor{white}{xxxxxxxxxxxxxxxxxxxxxxxxxxxxxx}
\begin{equation}\label{eq2.13}
c_0\ge\frac{(1+\alpha^2)^{1/2}}{4},
\end{equation}
and ignore the short remainder.

There is no problem with satisfying the requirement \eqref{eq2.13}, as we simply increase the superdensity threshold constant $c_0$ if necessary.
The inequalities in \eqref{eq2.12} and \eqref{eq2.13} are vital, since otherwise the intervals $\frakI$ would be too long to be subintervals of~$I_0$.

Superdensity implies that a geodesic segment with slope $\alpha$ and length $6c_0n/C$ visits the middle third of~$I_1$,
and ensures also that a geodesic flow with slope $\alpha$ and length $6c_0n/C$ sweeps any subinterval~$\frakI$, in view of \eqref{eq2.12},
to a union of subintervals in $I_1$ but not necessarily in the middle third of~$I_1$.
Combining this with Lemma~\ref{lem1} and \eqref{eq2.10},
we see that a geodesic flow with slope $\alpha$ and length $6c_0n/C$ sweeps each $\frakI$
with at most one splitting to a union of at most two subintervals in~$I_1$.
Denote by $I_1(0)$ the longest  subinterval in $I_1$ arising as part of an image of the geodesic flow in this process,
and let $I_0(0)$ denote the pre-image of $I_1(0)$ in~$I_0$.
Then
\begin{equation}\label{eq2.14}
\frac{C}{3n}
=\frac{\vert I_0\vert}{3}
\ge\vert I_0(0)\vert
=\vert I_1(0)\vert
\ge\frac{1}{4q_k}\ge\frac{C(1+\alpha^2)^{1/2}}{24(A+1)c_0n}
=c_1\vert I_0\vert,
\end{equation}
where
\textcolor{white}{xxxxxxxxxxxxxxxxxxxxxxxxxxxxxx}
\begin{displaymath}
c_1=c_1(\alpha)=\frac{(1+\alpha^2)^{1/2}}{24(A+1)c_0}.
\end{displaymath}
Note that the number of vertical edges hit by the geodesic flow with slope $\alpha$ from $I_0(0)$ to $I_1(0)$ is bounded above by \eqref{eq2.11},
and the length of the flow is bounded by
\begin{equation}\label{eq2.15}
q_k(1+\alpha^2)^{1/2}\le\frac{6(A+1)c_0n}{C}.
\end{equation}

We have two cases:

\begin{case1}
We have
\textcolor{white}{xxxxxxxxxxxxxxxxxxxxxxxxxxxxxx}
\begin{equation}\label{eq2.16}
\frac{\vert I_1(0)\cap\XXX_n\vert}{\vert I_1(0)\vert}\ge\left(1-\frac{\eps}{2}\right)\frac{\vert I_1\cap\XXX_n\vert}{\vert I_1\vert}.
\end{equation}
\end{case1}

\begin{case2}
We have
\textcolor{white}{xxxxxxxxxxxxxxxxxxxxxxxxxxxxxx}
\begin{equation}\label{eq2.17}
\frac{\vert I_1(0)\cap\XXX_n\vert}{\vert I_1(0)\vert}<\left(1-\frac{\eps}{2}\right)\frac{\vert I_1\cap\XXX_n\vert}{\vert I_1\vert}.
\end{equation}
\end{case2}

Before we study these cases in detail, we first give some heuristics to explain the underlying ideas.
If Case~1 holds, then we show that the subinterval $I_0(0)\subset I_0$ exhibits a surplus density of points of $\XXX_n$ compared to~$I_0$.
Removing this subinterval, the remaining part of $I_0$ then exhibits a deficit density of points of $\XXX_n$ compared to~$I_0$.
If Case~2 holds, then the subinterval $I_1(0)\subset I_1$ exhibits a deficit density of points of $\XXX_n$ compared to~$I_1$.
Removing this subinterval, the remaining part of $I_1$ then exhibits a surplus density of points of $\XXX_n$ compared to~$I_1$.
Thus we either obtain a subinterval of $I_0$ that exhibits deficit density of points of $\XXX_n$ compared to~$I_0$,
or obtain a subinterval of $I_1$ that exhibits surplus density of points of $\XXX_n$ compared to~$I_1$.
We then repeat the analysis on intervals of length equal to the length of this subinterval, and this sets up an iteration process.
We then show that if Case~B holds, then this iteration has to terminate after a finite number of steps,
and this gives the necessary contradiction.

%
%

\subsection{Case 1: density decrease}\label{sec21}

Suppose that the inequality \eqref{eq2.16} holds.
To find a lower bound to $\vert I_0(0)\cap\XXX_n\vert$, we consider the transportation process
as the geodesic flow with slope $\alpha$ moves the interval $I_0(0)$ to the interval $I_1(0)$.
Let
\begin{displaymath}
\XXX_n=\{x_1,\ldots,x_n\},
\end{displaymath}
and let $n^*$ denote the number of times that this finite transportation process from $I_0(0)$ to $I_1(0)$
intersects a vertical edge of~$\PPP$,
so that $I_1$ is the $n^*$-th vertical edge in this process.
Suppose that $j=1,\ldots,n$ and the point $x_j\in I_1(0)$, contributing a count of $1$ to $\vert I_1(0)\cap\XXX_n\vert$.
Then provided that $j-n^*>0$, the point $x_{j-n^*}\in I_0(0)$, contributing a count of $1$ to $\vert I_0(0)\cap\XXX_n\vert$.
On the other hand, if $j\le n^*$, then while the point $x_j\in I_1(0)$ contributes a count of $1$ to $\vert I_1(0)\cap\XXX_n\vert$,
there is no corresponding contribution to $\vert I_0(0)\cap\XXX_n\vert$.
In other words,
\begin{equation}\label{eq2.18}
\vert I_1(0)\cap\XXX_n\vert-\vert I_0(0)\cap\XXX_n\vert\le\vert I_1(0)\cap\XXX_{n^*}\vert,
\end{equation}
where $\XXX_{n^*}$ is the collection of the first $n^*$ intersection points in~$\XXX_n$.
Since geodesic flow on $\PPP$ modulo~$1$ is geodesic flow on the unit torus,
and the slope $\alpha$ is badly approximable with continued fraction digit upper bound~$A$, 
it follows from Lemma~\ref{lem2} that
\textcolor{white}{xxxxxxxxxxxxxxxxxxxxxxxxxxxxxx}
\begin{equation}\label{eq2.19}
\vert I_1(0)\cap\XXX_{n^*}\vert\le2(A+1)n^*\vert I_1(0)\vert+2.
\end{equation}
On the other hand, we have
\textcolor{white}{xxxxxxxxxxxxxxxxxxxxxxxxxxxxxx}
\begin{equation}\label{eq2.20}
n^*\le q_k\le\frac{6(A+1)c_0n}{C(1+\alpha^2)^{1/2}}.
\end{equation}
Combining \eqref{eq2.18}--\eqref{eq2.20}, we deduce that
\begin{equation}\label{eq2.21}
\vert I_0(0)\cap\XXX_n\vert
\ge\vert I_1(0)\cap\XXX_n\vert-\frac{12(A+1)^2c_0n}{C(1+\alpha^2)^{1/2}}\vert I_1(0)\vert-2.
\end{equation}
Note that \eqref{eq2.16} gives
\begin{equation}\label{eq2.22}
\vert I_1(0)\cap\XXX_n\vert
\ge\left(1-\frac{\eps}{2}\right)\vert I_1(0)\vert\frac{\vert I_1\cap\XXX_n\vert}{\vert I_1\vert}.
\end{equation}
Combining \eqref{eq2.21} and \eqref{eq2.22} and noting that $\vert I_1\cap\XXX_n\vert\ge C/b$, $\vert I_1\vert=C/n$
and $\vert I_0(0)\vert=\vert I_1(0)\vert$, we obtain the inequality
\begin{equation}\label{eq2.23}
\vert I_0(0)\cap\XXX_n\vert
\ge\left(1-\frac{3\eps}{4}\right)\vert I_0(0)\vert\frac{\vert I_1\cap\XXX_n\vert}{\vert I_1\vert},
\end{equation}
provided that
\begin{displaymath}
\frac{12(A+1)^2c_0n}{C(1+\alpha^2)^{1/2}}\le\frac{\eps}{8}\frac{\vert I_1\cap\XXX_n\vert}{\vert I_1\vert}
\quad\mbox{and}\quad
2\le\frac{\eps}{8}\vert I_0(0)\vert\frac{\vert I_1\cap\XXX_n\vert}{\vert I_1\vert},
\end{displaymath}
and these are guaranteed if we ensure that
\begin{equation}\label{eq2.24}
C\ge\frac{96(A+1)^2c_0b}{\eps(1+\alpha^2)^{1/2}}
\quad\mbox{and}\quad
C\ge\frac{16b}{c_1\eps},
\end{equation}
the latter inequality in view of \eqref{eq2.14}.
Combining \eqref{eq2.3} and \eqref{eq2.23} now leads to the inequality
\textcolor{white}{xxxxxxxxxxxxxxxxxxxxxxxxxxxxxx}
\begin{align}\label{eq2.25}
\vert I_0(0)\cap\XXX_n\vert
&
\ge\left(1-\frac{3\eps}{4}\right)(1-\eps)^{-1}\vert I_0(0)\vert\frac{\vert I_0\cap\XXX_n\vert}{\vert I_0\vert}
\nonumber
\\
&
>\left(1+\frac{\eps}{4}\right)\vert I_0(0)\vert\frac{\vert I_0\cap\XXX_n\vert}{\vert I_0\vert},
\end{align}
provided that \eqref{eq2.24} holds.

There are at most two subintervals $I_{0,1},I_{0,2}\subset I_0$ such that
\begin{equation}\label{eq2.26}
I_0=I_0(0)\cup I_{0,1}\cup I_{0,2}.
\end{equation}
Removing the interval $I_0(0)$ and combining \eqref{eq2.25} and \eqref{eq2.26}, we obtain
\begin{align}\label{eq2.27}
&
\vert I_{0,1}\cap\XXX_n\vert+\vert I_{0,2}\cap\XXX_n\vert
=\vert I_0\cap\XXX_n\vert-\vert I_0(0)\cap\XXX_n\vert
\nonumber
\\
&\quad
<\vert I_0\cap\XXX_n\vert\left(1-\left(1+\frac{\eps}{4}\right)\frac{\vert I_0(0)\vert}{\vert I_0\vert}\right)
\le\vert I_0\cap\XXX_n\vert\left(\frac{\vert I_{0,1}\vert+\vert I_{0,2}\vert}{\vert I_0\vert}-\frac{c_1\eps}{4}\right),
\end{align}
noting that \eqref{eq2.14} implies $\vert I_0(0)\vert/\vert I_0\vert\ge c_1$.
There are two possibilities, either
\begin{equation}\label{eq2.28}
\frac{\min\{\vert I_{0,1}\vert,\vert I_{0,2}\vert\}}{\vert I_0\vert}<\frac{c_1\eps}{8},
\end{equation}
or
\textcolor{white}{xxxxxxxxxxxxxxxxxxxxxxxxxxxxxx}
\begin{equation}\label{eq2.29}
\frac{\min\{\vert I_{0,1}\vert,\vert I_{0,2}\vert\}}{\vert I_0\vert}\ge\frac{c_1\eps}{8}.
\end{equation}
If \eqref{eq2.28} holds, then we may assume without loss of generality that
\begin{displaymath}
\vert I_{0,1}\vert\ge\vert I_{0,2}\vert,
\quad\mbox{so that}\quad
\frac{\vert I_{0,2}\vert}{\vert I_0\vert}<\frac{c_1\eps}{8},
\end{displaymath}
and so it follows from \eqref{eq2.27} that
\begin{equation}\label{eq2.30}
\vert I_{0,1}\cap\XXX_n\vert
\le\vert I_0\cap\XXX_n\vert\left(\frac{\vert I_{0,1}\vert}{\vert I_0\vert}-\frac{c_1\eps}{8}\right).
\end{equation}
On the other hand, if \eqref{eq2.29} holds, then since it follows from \eqref{eq2.27} that
\begin{displaymath}
\vert I_{0,1}\cap\XXX_n\vert+\vert I_{0,2}\cap\XXX_n\vert
\le\vert I_0\cap\XXX_n\vert\left(\frac{\vert I_{0,1}\vert}{\vert I_0\vert}-\frac{c_1\eps}{8}\right)
+\vert I_0\cap\XXX_n\vert\left(\frac{\vert I_{0,2}\vert}{\vert I_0\vert}-\frac{c_1\eps}{8}\right),
\end{displaymath}
we may assume without loss of generality that \eqref{eq2.30} holds again.
Note now that \eqref{eq2.30} leads to the inequality
\textcolor{white}{xxxxxxxxxxxxxxxxxxxxxxxxxxxxxx}
\begin{displaymath}
\vert I_{0,1}\vert\ge\frac{c_1\eps}{8}\vert I_0\vert,
\end{displaymath}
as well as the inequality
\begin{equation}\label{eq2.31}
\frac{\vert I_{0,1}\cap\XXX_n\vert}{\vert I_{0,1}\vert}
\le\frac{\vert I_0\cap\XXX_n\vert}{\vert I_0\vert}\left(1-\frac{c_1\eps}{8}\frac{\vert I_0\vert}{\vert I_{0,1}\vert}\right)
\le\left(1-\frac{c_1\eps}{8}\right)\frac{\vert I_0\cap\XXX_n\vert}{\vert I_0\vert}.
\end{equation}
Thus the switch from $I_0$ to $I_{0,1}$ leads to density decrease by a factor of $1-c_1\eps/8$.

The ratio $\vert I_0\vert/\vert I_{0,1}\vert$ is not necessarily an integer.
To overcome this issue, we shall replace $I_{0,1}$ by a suitable subinterval at the expense of part of the density decrease.

We shall use the following almost trivial observation a number of times.

\begin{lem}\label{lem3}
Suppose that $I$ is a finite interval of real numbers, $\YYY\subset I$ is a finite subset with $m$ elements,
and $z$ is a real number satisfying $0<z\le\vert I\vert$.

\emph{(i)}
Then there exists a subinterval $I'\subset I$ of length $\vert I'\vert=z$ such that
\begin{displaymath}
\vert I'\cap\YYY\vert\le m\frac{z}{\vert I\vert-z}.
\end{displaymath}

\emph{(ii)}
Suppose further that there exists an integer $B\ge1$ such that every subinterval $I^\dagger\subset I$ satisfies
\textcolor{white}{xxxxxxxxxxxxxxxxxxxxxxxxxxxxxx}
\begin{equation}\label{eq2.32}
\vert I^\dagger\cap\YYY\vert\le Bm\frac{\vert I^\dagger\vert}{\vert I\vert}+2.
\end{equation}
Then there exists a subinterval $I''\subset I$ of length $\vert I''\vert=z$ such that
\begin{displaymath}
\vert I''\cap\YYY\vert\ge(m-2)\frac{z}{\vert I\vert}-Bm\left(\frac{z}{\vert I\vert}\right)^2.
\end{displaymath}
\end{lem}

\begin{proof}
Write $\vert I\vert=kz+w$, where $k\ge1$ is an integer and $0\le w<z$.
We partition the interval $I$ into a union
\begin{displaymath}
I=J_0\cup J_1\cup\ldots\cup J_k,
\quad
\vert J_0\vert=w,
\quad
\vert J_1\vert=\ldots=\vert J_k\vert=z.
\end{displaymath}

(i)
Among the intervals $J=J_1,\ldots,J_k$, let $I'$ be one for which $\vert J\cap\YYY\vert$ is minimal.
Observing the inequality
\textcolor{white}{xxxxxxxxxxxxxxxxxxxxxxxxxxxxxx}
\begin{displaymath}
k=\frac{\vert I\vert-w}{z}>\frac{\vert I\vert-z}{z},
\end{displaymath}
we deduce that
\textcolor{white}{xxxxxxxxxxxxxxxxxxxxxxxxxxxxxx}
\begin{displaymath}
\vert I'\cap\YYY\vert\le\frac{\vert\YYY\vert}{k}\le m\frac{z}{\vert I\vert-z}.
\end{displaymath}

(ii)
Among the intervals $J=J_1,\ldots,J_k$, let $I''$ be one for which $\vert J\cap\YYY\vert$ is maximal.
Applying \eqref{eq2.32} to the subinterval~$J_0$, we have
\begin{displaymath}
\vert J_0\cap\YYY\vert<\frac{Bmz}{\vert I\vert}+2.
\end{displaymath}
Observing this and the inequality
\begin{displaymath}
k=\frac{\vert I\vert-w}{z}\le\frac{\vert I\vert}{z},
\end{displaymath}
we deduce that
\begin{displaymath}
\vert I''\cap\YYY\vert
\ge\frac{\vert\YYY\vert-\vert J_0\cap\YYY\vert}{k}
\ge\frac{z}{\vert I\vert}\left(m-\frac{Bmz}{\vert I\vert}-2\right)
=(m-2)\frac{z}{\vert I\vert}-Bm\left(\frac{z}{\vert I\vert}\right)^2.
\end{displaymath}

This completes the proof.
\end{proof}

Let $h_0$ be the unique integer satisfying
\begin{equation}\label{eq2.33}
h_0-1<\frac{16\vert I_0\vert}{c_1\eps\vert I_{0,1}\vert}\le h_0,
\end{equation}
so that
\textcolor{white}{xxxxxxxxxxxxxxxxxxxxxxxxxxxxxx}
\begin{equation}\label{eq2.34}
\frac{\vert I_0\vert}{h_0}\le\frac{c_1\eps}{16}\vert I_{0,1}\vert\le\vert I_{0,1}\vert,
\end{equation}
provided that $\eps$ is sufficiently small.

We now apply Lemma~\ref{lem3}(i) with $I=I_{0,1}$, $\YYY=I_{0,1}\cap\XXX_n$ and $z=\vert I_0\vert/h_0$.
Then there exists a subinterval $I_0(\star)\subset I_{0,1}$ with $\vert I_0(\star)\vert=z$ such that
\begin{displaymath}
\vert I_0(\star)\cap\XXX_n\vert\le\vert I_{0,1}\cap\XXX_n\vert\frac{z}{\vert I_{0,1}\vert-z}.
\end{displaymath}
Combining this with the estimate \eqref{eq2.31}, we deduce that
\begin{equation}\label{eq2.35}
\frac{\vert I_0(\star)\cap\XXX_n\vert}{\vert I_0(\star)\vert}
\le\frac{\vert I_{0,1}\cap\XXX_n\vert}{\vert I_{0,1}\vert}\,\frac{\vert I_{0,1}\vert}{\vert I_{0,1}\vert-z}
\le\left(1-\frac{c_1\eps}{8}\right)\frac{\vert I_0\cap\XXX_n\vert}{\vert I_0\vert}\frac{\vert I_{0,1}\vert}{\vert I_{0,1}\vert-z}.
\end{equation}
Note from \eqref{eq2.34} that
\textcolor{white}{xxxxxxxxxxxxxxxxxxxxxxxxxxxxxx}
\begin{displaymath}
\frac{\vert I_{0,1}\vert}{\vert I_{0,1}\vert-z}
\le\left(1-\frac{c_1\eps}{16}\right)^{-1}.
\end{displaymath}
Combining this with \eqref{eq2.35}, we deduce that
\begin{equation}\label{eq2.36}
\frac{\vert I_0(\star)\cap\XXX_n\vert}{\vert I_0(\star)\vert}
\le\left(1-\frac{c_1\eps}{8}\right)\left(1-\frac{c_1\eps}{16}\right)^{-1}\frac{\vert I_0\cap\XXX_n\vert}{\vert I_0\vert}
\le\left(1-\frac{c_1\eps}{16}\right)\frac{\vert I_0\cap\XXX_n\vert}{\vert I_0\vert}.
\end{equation}
Thus the switch from $I_0$ to $I_0(\star)$ leads to density decrease by a factor of $1-c_1\eps/16$,
with the added benefit that the ratio $\vert I_0\vert/\vert I_0(\star)\vert$ is an integer~$h_0$.

To obtain a subinterval of $I_1$ of the same length as $I_0(\star)$, we next divide $I_1$ into $h_0$ equal parts,
and denote by $I_1(\star)$ one of these subintervals with the maximum intersection with the set~$\XXX_n$.
Then
\begin{equation}\label{eq2.37}
\frac{\vert I_1(\star)\cap\XXX_n\vert}{\vert I_1(\star)\vert}\ge\frac{\vert I_1\cap\XXX_n\vert}{\vert I_1\vert}.
\end{equation}
Note that
\textcolor{white}{xxxxxxxxxxxxxxxxxxxxxxxxxxxxxx}
\begin{equation}\label{eq2.38}
\vert I_1(\star)\vert=\vert I_0(\star)\vert=\frac{C_1}{n},
\quad
C_1=\frac{C}{h_0},
\quad
h_0<c_2,
\end{equation}
where the constant $c_2=c_2(\eps)>0$ is independent of $n$ and~$C$.

%
%

\subsection{Case 2: density increase}\label{sec22}

Suppose that the inequality \eqref{eq2.17} holds.
There are at most two subintervals $I_{1,1},I_{1,2}\subset I_1$ such that
\begin{equation}\label{eq2.39}
I_1=I_1(0)\cup I_{1,1}\cup I_{1,2}.
\end{equation}
Removing the interval $I_1(0)$ and combining \eqref{eq2.17} and \eqref{eq2.39}, we obtain
\begin{align}\label{eq2.40}
&
\vert I_{1,1}\cap\XXX_n\vert+\vert I_{1,2}\cap\XXX_n\vert
=\vert I_1\cap\XXX_n\vert-\vert I_1(0)\cap\XXX_n\vert
\nonumber
\\
&\quad
>\vert I_1\cap\XXX_n\vert\left(1-\left(1-\frac{\eps}{2}\right)\frac{\vert I_1(0)\vert}{\vert I_1\vert}\right)
\ge\vert I_1\cap\XXX_n\vert\left(\frac{\vert I_{1,1}\vert+\vert I_{1,2}\vert}{\vert I_1\vert}+\frac{c_1\eps}{2}\right),
\end{align}
noting that \eqref{eq2.14} implies $\vert I_1(0)\vert/\vert I_1\vert\ge c_1$.
There are two possibilities, either
\begin{equation}\label{eq2.41}
\frac{\min\{\vert I_{1,1}\vert,\vert I_{1,2}\vert\}}{\vert I_1\vert}<\frac{c_1\eps}{10(A+1)b},
\end{equation}
or
\textcolor{white}{xxxxxxxxxxxxxxxxxxxxxxxxxxxxxx}
\begin{equation}\label{eq2.42}
\frac{\min\{\vert I_{1,1}\vert,\vert I_{1,2}\vert\}}{\vert I_1\vert}\ge\frac{c_1\eps}{10(A+1)b}.
\end{equation}
If \eqref{eq2.41} holds, then we may assume without loss of generality that
\begin{equation}\label{eq2.43}
\vert I_{1,1}\vert\ge\vert I_{1,2}\vert,
\quad\mbox{so that}\quad
\frac{\vert I_{1,1}\vert}{\vert I_1\vert}\ge\frac{1}{3}
\quad\mbox{and}\quad
\frac{\vert I_{1,2}\vert}{\vert I_1\vert}<\frac{c_1\eps}{10(A+1)b}.
\end{equation}
Combining \eqref{eq2.1}, \eqref{eq2.5} and \eqref{eq2.43}, we obtain
\begin{displaymath}
\vert I_{1,2}\cap\XXX_n\vert
\le2(A+1)n\vert I_{1,2}\vert+2
\le\frac{c_1\eps}{5b}n\vert I_1\vert+2
\le\frac{c_1\eps}{4b}n\vert I_1\vert
\le\frac{c_1\eps}{4}\vert I_1\cap\XXX_n\vert,
\end{displaymath}
provided that
\textcolor{white}{xxxxxxxxxxxxxxxxxxxxxxxxxxxxxx}
\begin{equation}\label{eq2.44}
2\le\frac{c_1\eps}{20b}n\vert I_1\vert=\frac{c_1\eps C}{20b},
\quad\mbox{or}\quad
C\ge\frac{40b}{c_1\eps}.
\end{equation}
Substituting this into \eqref{eq2.40}, we deduce that
\begin{equation}\label{eq2.45}
\vert I_{1,1}\cap\XXX_n\vert
\ge\vert I_1\cap\XXX_n\vert\left(\frac{\vert I_{1,1}\vert}{\vert I_1\vert}+\frac{c_1\eps}{4}\right).
\end{equation}
On the other hand, if \eqref{eq2.42} holds, then since it follows from \eqref{eq2.40} that
\begin{displaymath}
\vert I_{1,1}\cap\XXX_n\vert+\vert I_{1,2}\cap\XXX_n\vert
\ge\vert I_1\cap\XXX_n\vert\left(\frac{\vert I_{1,1}\vert}{\vert I_1\vert}+\frac{c_1\eps}{4}\right)
+\vert I_1\cap\XXX_n\vert\left(\frac{\vert I_{1,2}\vert}{\vert I_1\vert}+\frac{c_1\eps}{4}\right),
\end{displaymath}
we may assume without loss of generality that \eqref{eq2.45} holds again.
Note now that
\begin{equation}\label{eq2.46}
\vert I_{1,1}\vert\ge\frac{c_1\eps}{10(A+1)b}\vert I_1\vert,
\end{equation}
provided that $\epsilon$ is sufficiently small, and \eqref{eq2.45} leads to the inequality
\begin{equation}\label{eq2.47}
\frac{\vert I_{1,1}\cap\XXX_n\vert}{\vert I_{1,1}\vert}
\ge\frac{\vert I_1\cap\XXX_n\vert}{\vert I_1\vert}\left(1+\frac{c_1\eps}{4}\frac{\vert I_1\vert}{\vert I_{1,1}\vert}\right)
\ge\left(1+\frac{c_1\eps}{4}\right)\frac{\vert I_1\cap\XXX_n\vert}{\vert I_1\vert}.
\end{equation}
Thus the switch from $I_1$ to $I_{1,1}$ leads to density increase by a factor $1+c_1\eps/4$.

The ratio $\vert I_1\vert/\vert I_{1,1}\vert$ is not necessarily an integer.
To overcome this issue, we shall replace $I_{1,1}$ by a suitable subinterval at the expense of part of the density increase.

Let $h_0$ be the unique integer satisfying
\begin{equation}\label{eq2.48}
h_0-1<\frac{48(A+1)b\vert I_1\vert}{c_1\eps\vert I_{1,1}\vert}\le h_0,
\end{equation}
so that
\textcolor{white}{xxxxxxxxxxxxxxxxxxxxxxxxxxxxxx}
\begin{equation}\label{eq2.49}
\frac{\vert I_1\vert}{h_0}\le\frac{c_1\eps}{48(A+1)b}\vert I_{1,1}\vert\le\vert I_{1,1}\vert,
\end{equation}
provided that $\epsilon$ is sufficiently small.

We now apply Lemma~\ref{lem3}(ii) with $I=I_{1,1}$, $\YYY=I_{1,1}\cap\XXX_n$ and $z=\vert I_1\vert/h_0$.
Note that in view of \eqref{eq2.5}, for every subinterval $I^\dagger\subset I_{1,1}$, we have
\begin{equation}\label{eq2.50}
\vert I^\dagger\cap\YYY\vert
=\vert I^\dagger\cap\XXX_n\vert
\le2(A+1)n\vert I^\dagger\vert+2.
\end{equation}
On the other hand, it follows from \eqref{eq2.1}, \eqref{eq2.47} and $\vert I_1\vert=C/n$ that
\begin{equation}\label{eq2.51}
\frac{\vert I_{1,1}\cap\XXX_n\vert}{\vert I_{1,1}\vert}
\ge\frac{\vert I_1\cap\XXX_n\vert}{\vert I_1\vert}
\ge\frac{n}{b}.
\end{equation}
Combining \eqref{eq2.50} and \eqref{eq2.51}, we have
\begin{displaymath}
\vert I^\dagger\cap\YYY\vert
\le2(A+1)b\vert I_{1,1}\cap\XXX_n\vert\frac{\vert I^\dagger\vert}{\vert I_{1,1}\vert}+2,
\end{displaymath}
so that Lemma~\ref{lem3}(ii) is valid with the constant $B=2(A+1)b$.
It follows that there exists a subinterval $I_1(\star)\subset I_{1,1}$ with $\vert I_1(\star)\vert=z$ such that
\begin{displaymath}
\vert I_1(\star)\cap\XXX_n\vert
\ge(\vert I_{1,1}\cap\XXX_n\vert-2)\frac{z}{\vert I_{1,1}\vert}-2(A+1)b\vert I_{1,1}\cap\XXX_n\vert\left(\frac{z}{\vert I_{1,1}\vert}\right)^2.
\end{displaymath}
Combining this with \eqref{eq2.49}, we have
\begin{align}\label{eq2.52}
\frac{\vert I_1(\star)\cap\XXX_n\vert}{\vert I_1(\star)\vert}
&
\ge\frac{\vert I_{1,1}\cap\XXX_n\vert}{\vert I_{1,1}\vert}\left(1-\frac{2}{\vert I_{1,1}\cap\XXX_n\vert}
-\frac{2(A+1)b}{h_0}\frac{\vert I_1\vert}{\vert I_{1,1}\vert}\right)
\nonumber
\\
&
\ge\frac{\vert I_{1,1}\cap\XXX_n\vert}{\vert I_{1,1}\vert}\left(1-\frac{2}{\vert I_{1,1}\cap\XXX_n\vert}-\frac{c_1\eps}{24}\right).
\end{align}
Next, combining \eqref{eq2.46} and \eqref{eq2.51}, and recalling that $\vert I_1\vert=C/n$, we obtain
\begin{displaymath}
\vert I_{1,1}\cap\XXX_n\vert\ge\frac{c_1C\eps}{10(A+1)b^2}.
\end{displaymath}
We want the bound
\textcolor{white}{xxxxxxxxxxxxxxxxxxxxxxxxxxxxxx}
\begin{equation}\label{eq2.53}
\frac{2}{\vert I_{1,1}\cap\XXX_n\vert}\le\frac{c_1\eps}{24},
\end{equation}
and this can be guaranteed if we ensure that
\begin{equation}\label{eq2.54}
C\ge\frac{480(A+1)b^2}{(c_1\eps)^2}.
\end{equation}
Combining \eqref{eq2.52} and \eqref{eq2.53}, we now obtain
\begin{displaymath}
\frac{\vert I_1(\star)\cap\XXX_n\vert}{\vert I_1(\star)\vert}
\ge\frac{\vert I_{1,1}\cap\XXX_n\vert}{\vert I_{1,1}\vert}\left(1-\frac{c_1\eps}{12}\right).
\end{displaymath}
Combining this with \eqref{eq2.47}, we deduce that
\begin{equation}\label{eq2.55}
\frac{\vert I_1(\star)\cap\XXX_n\vert}{\vert I_1(\star)\vert}
\ge\left(1+\frac{c_1\eps}{4}\right)\left(1-\frac{c_1\eps}{12}\right)\frac{\vert I_1\cap\XXX_n\vert}{\vert I_1\vert}
\ge\left(1+\frac{c_1\eps}{12}\right)\frac{\vert I_1\cap\XXX_n\vert}{\vert I_1\vert},
\end{equation}
provided that $\epsilon$ is sufficiently small.
Thus the switch from $I_1$ to $I_1(\star)$ leads to density increase by a factor $1+c_1\eps/12$, with the added benefit that the ratio
$\vert I_1\vert/\vert I_1(\star)\vert$ is an integer~$h_0$.

To obtain a subinterval of $I_0$ of the same length as $I_1(\star)$, we next divide $I_0$ into $h_0$ equal parts,
and denote by $I_0(\star)$ one of these subintervals with the minimum intersection with the set~$\XXX_n$.
Then
\begin{equation}\label{eq2.56}
\frac{\vert I_0(\star)\cap\XXX_n\vert}{\vert I_0(\star)\vert}\le\frac{\vert I_0\cap\XXX_n\vert}{\vert I_0\vert}.
\end{equation}
Note that
\textcolor{white}{xxxxxxxxxxxxxxxxxxxxxxxxxxxxxx}
\begin{equation}\label{eq2.57}
\vert I_0(\star)\vert=\vert I_1(\star)\vert=\frac{C_1}{n},
\quad
C_1=\frac{C}{h_0},
\quad
h_0<c_2,
\end{equation}
where the constant $c_2=c_2(\eps)>0$ is independent of $n$ and~$C$.

%
%

\section{Iteration process: step 1}\label{sec3}

The reader may have observed that we have used the inequality \eqref{eq2.3}, which corresponds to Case~B,
in the argument in Case~1 in the previous section, but not in Case~2.
We now discuss the iterative process that arises from Case~B.

Let $\III_n(\PPP;C_1)$ denote the collection of any subinterval $I$ of any vertical edge of $\PPP$
with length $\vert I\vert=C_1/n$, and let $I_0^{(1)},I_1^{(1)}\in\III_n(\PPP;C_1)$ be subintervals satisfying
\begin{displaymath}
V_n(I_0^{(1)})=\min_{I\in\III_n(\PPP;C_1)}\vert I\cap\XXX_n\vert
\quad\mbox{and}\quad
V_n(I_1^{(1)})=\max_{I\in\III_n(\PPP;C_1)}\vert I\cap\XXX_n\vert,
\end{displaymath}
so that $I_0^{(1)}$ and $I_1^{(1)}$ have respectively the smallest and largest visiting numbers with respect to $\XXX_n$
among all the subintervals $I$ under consideration.
It is clear that
\begin{displaymath}
\vert I_0^{(1)}\cap\XXX_n\vert\le\frac{C_1}{b}\le\vert I_1^{(1)}\cap\XXX_n\vert.
\end{displaymath}
Furthermore, it either, in Case~1, follows from \eqref{eq2.36}--\eqref{eq2.38} that
\begin{equation}\label{eq3.1}
\frac{\vert I_0^{(1)}\cap\XXX_n\vert}{\vert I_0^{(1)}\vert}
\le\frac{\vert I_0(\star)\cap\XXX_n\vert}{\vert I_0(\star)\vert}
\le\left(1-\frac{c_1\eps}{16}\right)\frac{\vert I_0\cap\XXX_n\vert}{\vert I_0\vert}
\end{equation}
and
\textcolor{white}{xxxxxxxxxxxxxxxxxxxxxxxxxxxxxx}
\begin{equation}\label{eq3.2}
\frac{\vert I_1^{(1)}\cap\XXX_n\vert}{\vert I_1^{(1)}\vert}
\ge\frac{\vert I_1(\star)\cap\XXX_n\vert}{\vert I_1(\star)\vert}
\ge\frac{\vert I_1\cap\XXX_n\vert}{\vert I_1\vert},
\end{equation}
or, in Case~2, follows from \eqref{eq2.55}--\eqref{eq2.57} that
\begin{equation}\label{eq3.3}
\frac{\vert I_0^{(1)}\cap\XXX_n\vert}{\vert I_0^{(1)}\vert}
\le\frac{\vert I_0(\star)\cap\XXX_n\vert}{\vert I_0(\star)\vert}
\le\frac{\vert I_0\cap\XXX_n\vert}{\vert I_0\vert}
\end{equation}
and
\textcolor{white}{xxxxxxxxxxxxxxxxxxxxxxxxxxxxxx}
\begin{equation}\label{eq3.4}
\frac{\vert I_1^{(1)}\cap\XXX_n\vert}{\vert I_1^{(1)}\vert}
\ge\frac{\vert I_1(\star)\cap\XXX_n\vert}{\vert I_1(\star)\vert}
\ge\left(1+\frac{c_1\eps}{12}\right)\frac{\vert I_1\cap\XXX_n\vert}{\vert I_1\vert}.
\end{equation}

We now concentrate on Case~B, so that the inequality \eqref{eq2.3} holds.
Combining this with \eqref{eq3.1} and \eqref{eq3.2}, or with \eqref{eq3.3} and \eqref{eq3.4}, we obtain
\begin{equation}\label{eq3.5}
\frac{\vert I_0^{(1)}\cap\XXX_n\vert}{\vert I_1^{(1)}\cap\XXX_n\vert}<1-\eps.
\end{equation}

\begin{remark}
Note that \eqref{eq3.5} is the analog of \eqref{eq2.3} and Case~B in Step~0.
It follows that if Case~B in Step~0 holds, then there is no analog of Case~A in Step~1.
\end{remark}

Repeating the argument in Step~0 between \eqref{eq2.10} and \eqref{eq2.15} with $I_0,I_1,C$ replaced by $I_0^{(1)},I_1^{(1)},C_1$ respectively,
we obtain subintervals $I_0^{(1)}(0)\subset I_0^{(1)}$ and $I_1^{(1)}(0)\subset I_1^{(1)}$ such that
\textcolor{white}{xxxxxxxxxxxxxxxxxxxxxxxxxxxxxx}
\begin{equation}\label{eq3.6}
\frac{C_1}{3n}
=\frac{\vert I_0^{(1)}\vert}{3}
\ge\vert I_0^{(1)}(0)\vert
=\vert I_1^{(1)}(0)\vert
\ge c_1\vert I_0^{(1)}\vert,
\end{equation}
the analog of \eqref{eq2.14}.

We have two cases:

\begin{case1}
We have
\textcolor{white}{xxxxxxxxxxxxxxxxxxxxxxxxxxxxxx}
\begin{equation}\label{eq3.7}
\frac{\vert I^{(1)}_1(0)\cap\XXX_n\vert}{\vert I^{(1)}_1(0)\vert}
\ge\left(1-\frac{\eps}{2}\right)\frac{\vert I_1^{(1)}\cap\XXX_n\vert}{\vert I_1^{(1)}\vert}.
\end{equation}
\end{case1}

\begin{case2}
We have
\textcolor{white}{xxxxxxxxxxxxxxxxxxxxxxxxxxxxxx}
\begin{equation}\label{eq3.8}
\frac{\vert I^{(1)}_1(0)\cap\XXX_n\vert}{\vert I^{(1)}_1(0)\vert}
<\left(1-\frac{\eps}{2}\right)\frac{\vert I_1^{(1)}\cap\XXX_n\vert}{\vert I_1^{(1)}\vert}.
\end{equation}
\end{case2}

%
%

\subsection{Case 1: density decrease}\label{sec31}

Suppose that the inequality \eqref{eq3.7} holds.
Then an argument analogous to that in Step~0 between \eqref{eq2.21} and \eqref{eq2.23} now leads to the inequality
\textcolor{white}{xxxxxxxxxxxxxxxxxxxxxxxxxxxxxx}
\begin{equation}\label{eq3.9}
\vert I^{(1)}_0(0)\cap\XXX_n\vert
\ge\left(1-\frac{3\eps}{4}\right)\vert I^{(1)}_0(0)\vert\frac{\vert I_1^{(1)}\cap\XXX_n\vert}{\vert I_1^{(1)}\vert},
\end{equation}
provided that
\textcolor{white}{xxxxxxxxxxxxxxxxxxxxxxxxxxxxxx}
\begin{equation}\label{eq3.10}
C_1\ge\frac{96(A+1)^2c_0b}{\eps(1+\alpha^2)^{1/2}}
\quad\mbox{and}\quad
C_1\ge\frac{16b}{c_1\eps}.
\end{equation}

Corresponding to \eqref{eq2.26}, there are at most two subintervals $I_{0,1}^{(1)},I_{0,2}^{(1)}\subset I_0^{(1)}$ such that
\textcolor{white}{xxxxxxxxxxxxxxxxxxxxxxxxxxxxxx}
\begin{displaymath}
I_0^{(1)}=I^{(1)}_0(0)\cup I_{0,1}^{(1)}\cup I_{0,2}^{(1)}.
\end{displaymath}
An argument analogous to that in Step~0 between \eqref{eq2.26} and \eqref{eq2.31} then shows that, without loss of generality,
\textcolor{white}{xxxxxxxxxxxxxxxxxxxxxxxxxxxxxx}
\begin{displaymath}
\vert I_{0,1}^{(1)}\vert\ge\frac{c_1\eps}{8}\vert I_0^{(1)}\vert,
\end{displaymath}
as well as
\textcolor{white}{xxxxxxxxxxxxxxxxxxxxxxxxxxxxxx}
\begin{displaymath}
\frac{\vert I_{0,1}^{(1)}\cap\XXX_n\vert}{\vert I_{0,1}^{(1)}\vert}
\le\left(1-\frac{c_1\eps}{8}\right)\frac{\vert I_0^{(1)}\cap\XXX_n\vert}{\vert I_0^{(1)}\vert}.
\end{displaymath}
An argument analogous to that in Step~0 between \eqref{eq2.33} and \eqref{eq2.36} then leads to the existence of a subinterval
$I_0^{(1)}(\star)\subset I_0^{(1)}$ satisfying $\vert I_0^{(1)}\vert/\vert I_0^{(1)}(\star)\vert=h_1$, where $h_1$ is the unique integer satisfying
\begin{displaymath}
h_1-1<\frac{16\vert I_0^{(1)}\vert}{c_1\eps\vert I_{0,1}^{(1)}\vert}\le h_1,
\end{displaymath}
such that
\textcolor{white}{xxxxxxxxxxxxxxxxxxxxxxxxxxxxxx}
\begin{equation}\label{eq3.11}
\frac{\vert I_0^{(1)}(\star)\cap\XXX_n\vert}{\vert I_0^{(1)}(\star)\vert}
\le\left(1-\frac{c_1\eps}{16}\right)\frac{\vert I_0^{(1)}\cap\XXX_n\vert}{\vert I_0^{(1)}\vert}.
\end{equation}

To obtain a subinterval of $I_1^{(1)}$ of the same length as $I_0^{(1)}(\star)$, we next divide $I_1^{(1)}$ into $h_1$ equal parts, and denote by
$I_1^{(1)}(\star)$ one of these subintervals with the maximum intersection with the set~$\XXX_n$.
Then
\begin{displaymath}
\frac{\vert I_1^{(1)}(\star)\cap\XXX_n\vert}{\vert I_1^{(1)}(\star)\vert}\ge\frac{\vert I_1^{(1)}\cap\XXX_n\vert}{\vert I_1^{(1)}\vert}.
\end{displaymath}
Note that
\textcolor{white}{xxxxxxxxxxxxxxxxxxxxxxxxxxxxxx}
\begin{equation}\label{eq3.12}
\vert I_1^{(1)}(\star)\vert=\vert I_0^{(1)}(\star)\vert=\frac{C_2}{n},
\quad
C_2=\frac{C_1}{h_1},
\quad
h_1<c_2,
\end{equation}
where the constant $c_2=c_2(\eps)>0$ is as in Step~0.

%
%

\subsection{Case 2: density increase}\label{sec32}

Suppose that the inequality \eqref{eq3.8} holds.
Then corresponding to \eqref{eq2.39}, there are at most two subintervals $I_{1,1}^{(1)},I_{1,2}^{(1)}\subset I_1^{(1)}$ such that
\begin{equation}\label{eq3.13}
I_1^{(1)}=I^{(1)}_1(0)\cup I_{1,1}^{(1)}\cup I_{1,2}^{(1)}.
\end{equation}
An argument analogous to that in Step~0 between \eqref{eq2.39} and \eqref{eq2.47} then shows that, without loss of generality,
\textcolor{white}{xxxxxxxxxxxxxxxxxxxxxxxxxxxxxx}
\begin{displaymath}
\vert I_{1,1}^{(1)}\vert\ge\frac{c_1\eps}{10(A+1)b}\vert I_1^{(1)}\vert,
\end{displaymath}
as well as
\textcolor{white}{xxxxxxxxxxxxxxxxxxxxxxxxxxxxxx}
\begin{displaymath}
\frac{\vert I_{1,1}^{(1)}\cap\XXX_n\vert}{\vert I_{1,1}^{(1)}\vert}
\ge\left(1+\frac{c_1\eps}{4}\right)\frac{\vert I_1^{(1)}\cap\XXX_n\vert}{\vert I_1^{(1)}\vert}.
\end{displaymath}
An argument analogous to that in Step~0 between \eqref{eq2.48} and \eqref{eq2.55} then leads to the existence of a subinterval
$I_1^{(1)}(\star)\subset I_1^{(1)}$ satisfying $\vert I_1^{(1)}\vert/\vert I_1^{(1)}(\star)\vert=h_1$, where $h_1$ is the unique integer satisfying
\begin{displaymath}
h_1-1<\frac{48(A+1)b\vert I_1^{(1)}\vert}{c_1\eps\vert I_{1,1}^{(1)}\vert}\le h_1,
\end{displaymath}
such that
\textcolor{white}{xxxxxxxxxxxxxxxxxxxxxxxxxxxxxx}
\begin{displaymath}
\frac{\vert I_1^{(1)}(\star)\cap\XXX_n\vert}{\vert I_1^{(1)}(\star)\vert}
\ge\left(1+\frac{c_1\eps}{12}\right)\frac{\vert I_1^{(1)}\cap\XXX_n\vert}{\vert I_1^{(1)}\vert},
\end{displaymath}
provided that
\textcolor{white}{xxxxxxxxxxxxxxxxxxxxxxxxxxxxxx}
\begin{equation}\label{eq3.14}
C_1\ge\frac{40b}{c_1\eps}
\quad\mbox{and}\quad
C_1\ge\frac{480(A+1)b^2}{(c_1\eps)^2}.
\end{equation}

To obtain a subinterval of $I_0^{(1)}$ of the same length as $I_1^{(1)}(\star)$, we next divide $I_0^{(1)}$ into $h_1$ equal parts,
and denote by $I_0^{(1)}(\star)$ one of these subintervals with the minimum intersection with the set~$\XXX_n$.
Then
\begin{displaymath}
\frac{\vert I_0^{(1)}(\star)\cap\XXX_n\vert}{\vert I_0^{(1)}(\star)\vert}\le\frac{\vert I_0^{(1)}\cap\XXX_n\vert}{\vert I_0^{(1)}\vert}.
\end{displaymath}
Note that
\textcolor{white}{xxxxxxxxxxxxxxxxxxxxxxxxxxxxxx}
\begin{equation}\label{eq3.15}
\vert I_0^{(1)}(\star)\vert=\vert I_1^{(1)}(\star)\vert=\frac{C_2}{n},
\quad
C_2=\frac{C_1}{h_1},
\quad
h_1<c_2,
\end{equation}
where the constant $c_2=c_2(\eps)>0$ is as in Step~0.

%
%

\section{Iteration process: general step}\label{sec4}

Let $\III_n(\PPP;C_i)$ denote the collection of any subinterval $I$ of any vertical edge of $\PPP$ with length $\vert I\vert=C_i/n$, and let
$I_0^{(i)},I_1^{(i)}\in\III_n(\PPP;C_i)$ be subintervals satisfying
\begin{displaymath}
V_n(I_0^{(i)})=\min_{I\in\III_n(\PPP;C_i)}\vert I\cap\XXX_n\vert
\quad\mbox{and}\quad
V_n(I_1^{(i)})=\max_{I\in\III_n(\PPP;C_i)}\vert I\cap\XXX_n\vert,
\end{displaymath}
so that $I_0^{(i)}$ and $I_1^{(i)}$ have respectively the smallest and largest visiting numbers with respect to $\XXX_n$
among all the subintervals $I$ under consideration.
It is clear that
\begin{displaymath}
\vert I_0^{(i)}\cap\XXX_n\vert\le\frac{C_i}{b}\le\vert I_1^{(i)}\cap\XXX_n\vert.
\end{displaymath}
Furthermore, we either, in Case~1 in the previous step and analogous to \eqref{eq3.1} and \eqref{eq3.2}, have
\textcolor{white}{xxxxxxxxxxxxxxxxxxxxxxxxxxxxxx}
\begin{equation}\label{eq4.1}
\frac{\vert I_0^{(i)}\cap\XXX_n\vert}{\vert I_0^{(i)}\vert}
\le\left(1-\frac{c_1\eps}{16}\right)\frac{\vert I_0^{(i-1)}\cap\XXX_n\vert}{\vert I_0^{(i-1)}\vert}
\end{equation}
and
\textcolor{white}{xxxxxxxxxxxxxxxxxxxxxxxxxxxxxx}
\begin{equation}\label{eq4.2}
\frac{\vert I_1^{(i)}\cap\XXX_n\vert}{\vert I_1^{(i)}\vert}
\ge\frac{\vert I_1^{(i-1)}\cap\XXX_n\vert}{\vert I_1^{(i-1)}\vert},
\end{equation}
or, in Case~2 in the previous step and analogous to \eqref{eq3.3} and \eqref{eq3.4}, have
\begin{equation}\label{eq4.3}
\frac{\vert I_0^{(i)}\cap\XXX_n\vert}{\vert I_0^{(i)}\vert}
\le\frac{\vert I_0^{(i-1)}\cap\XXX_n\vert}{\vert I_0^{(i-1)}\vert}
\end{equation}
and
\textcolor{white}{xxxxxxxxxxxxxxxxxxxxxxxxxxxxxx}
\begin{equation}\label{eq4.4}
\frac{\vert I_1^{(i)}\cap\XXX_n\vert}{\vert I_1^{(i)}\vert}
\ge\left(1+\frac{c_1\eps}{12}\right)\frac{\vert I_1^{(i-1)}\cap\XXX_n\vert}{\vert I_1^{(i-1)}\vert}.
\end{equation}
Combining the estimate
\textcolor{white}{xxxxxxxxxxxxxxxxxxxxxxxxxxxxxx}
\begin{displaymath}
\frac{\vert I_0^{(i-1)}\cap\XXX_n\vert}{\vert I_1^{(i-1)}\cap\XXX_n\vert}<1-\eps
\end{displaymath}
from the previous step with \eqref{eq4.1} and \eqref{eq4.2}, or with \eqref{eq4.3} and \eqref{eq4.4}, we obtain
\begin{displaymath}
\frac{\vert I_0^{(i)}\cap\XXX_n\vert}{\vert I_1^{(i)}\cap\XXX_n\vert}<1-\eps,
\end{displaymath}
the analog of \eqref{eq2.3} and \eqref{eq3.5}.

On the other hand, iterating \eqref{eq4.1}--\eqref{eq4.4} carefully, we obtain
\begin{equation}\label{eq4.5}
\frac{\vert I_0^{(i)}\cap\XXX_n\vert}{\vert I_0^{(i)}\vert}
\le\left(1-\frac{c_1\eps}{16}\right)^{i_1}\frac{\vert I_0\cap\XXX_n\vert}{\vert I_0\vert}
\end{equation}
and
\textcolor{white}{xxxxxxxxxxxxxxxxxxxxxxxxxxxxxx}
\begin{equation}\label{eq4.6}
\frac{\vert I_1^{(i)}\cap\XXX_n\vert}{\vert I_1^{(i)}\vert}
\ge\left(1+\frac{c_1\eps}{12}\right)^{i_2}\frac{\vert I_1\cap\XXX_n\vert}{\vert I_1\vert},
\end{equation}
where $i_1$ and $i_2$ denote respectively the number of times Case~1 and Case~2 are valid in the previous $i=i_1+i_2$ steps.
Combining \eqref{eq2.1}, \eqref{eq4.5} and \eqref{eq4.6}, and recalling that $\vert I_0\vert=\vert I_1\vert$
and $\vert I_0^{(i)}\vert=\vert I_1^{(i)}\vert$, we obtain the inequality
\begin{equation}\label{eq4.7}
\vert I_0^{(i)}\cap\XXX_n\vert\le\left(1-\frac{c_1\eps}{16}\right)^{i_1}\vert I_1^{(i)}\cap\XXX_n\vert.
\end{equation}

Repeating the argument in Step~0 between \eqref{eq2.10} and \eqref{eq2.15} with $I_0,I_1,C$ replaced by $I_0^{(i)},I_1^{(i)},C_i$ respectively,
we obtain subintervals $I_0^{(1)}(0)\subset I_0^{(1)}$ and $I_1^{(1)}(0)\subset I_1^{(1)}$ such that
\textcolor{white}{xxxxxxxxxxxxxxxxxxxxxxxxxxxxxx}
\begin{equation}\label{eq4.8}
\frac{C_i}{3n}
=\frac{\vert I_0^{(i)}\vert}{3}
\ge\vert I_0^{(i)}(0)\vert
=\vert I_1^{(i)}(0)\vert
\ge c_1\vert I_0^{(i)}\vert,
\end{equation}
the analog of \eqref{eq2.14} and \eqref{eq3.6},
where the constant $c_1=c_1(\PPP;\alpha)$ in \eqref{eq4.8} is exactly the same as before.

We have two cases:

\begin{case1}
We have
\textcolor{white}{xxxxxxxxxxxxxxxxxxxxxxxxxxxxxx}
\begin{equation}\label{eq4.9}
\frac{\vert I^{(i)}_1(0)\cap\XXX_n\vert}{\vert I^{(i)}_1(0)\vert}
\ge\left(1-\frac{\eps}{2}\right)\frac{\vert I_1^{(i)}\cap\XXX_n\vert}{\vert I_1^{(i)}\vert}.
\end{equation}
\end{case1}

\begin{case2}
We have
\textcolor{white}{xxxxxxxxxxxxxxxxxxxxxxxxxxxxxx}
\begin{equation}\label{eq4.10}
\frac{\vert I^{(i)}_1(0)\cap\XXX_n\vert}{\vert I^{(i)}_1(0)\vert}
<\left(1-\frac{\eps}{2}\right)\frac{\vert I_1^{(i)}\cap\XXX_n\vert}{\vert I_1^{(i)}\vert}.
\end{equation}
\end{case2}

%
%

\subsection{Case 1: density decrease}\label{sec41}

Suppose that the inequality \eqref{eq4.9} holds.
Then an argument analogous to that in Step~0 between \eqref{eq2.21} and \eqref{eq2.36} and that in Step~1
between \eqref{eq3.9} and \eqref{eq3.11} leads to the existence of a subinterval $I_0^{(i)}(\star)\subset I_0^{(i)}$
satisfying $\vert I_0^{(i)}\vert/\vert I_0^{(i)}(\star)\vert=h_i$, where $h_i$ is an integer and
\begin{displaymath}
\frac{\vert I_0^{(i)}(\star)\cap\XXX_n\vert}{\vert I_0^{(i)}(\star)\vert}
\le\left(1-\frac{c_1\eps}{16}\right)\frac{\vert I_0^{(i)}\cap\XXX_n\vert}{\vert I_0^{(i)}\vert}.
\end{displaymath}
provided that
\textcolor{white}{xxxxxxxxxxxxxxxxxxxxxxxxxxxxxx}
\begin{equation}\label{eq4.11}
C_i\ge\frac{96(A+1)^2c_0b}{\eps(1+\alpha^2)^{1/2}}
\quad\mbox{and}\quad
C_i\ge\frac{16b}{c_1\eps},
\end{equation}

To obtain a subinterval of $I_1^{(i)}$ of the same length as $I_0^{(i)}(\star)$, we next divide $I_1^{(i)}$ into $h_i$ equal parts,
and denote by $I_1^{(i)}(\star)$ one of these subintervals with the maximum intersection with the set~$\XXX_n$.
Then
\begin{displaymath}
\frac{\vert I_1^{(i)}(\star)\cap\XXX_n\vert}{\vert I_1^{(i)}(\star)\vert}\ge\frac{\vert I_1^{(i)}\cap\XXX_n\vert}{\vert I_1^{(i)}\vert}.
\end{displaymath}
We have
\textcolor{white}{xxxxxxxxxxxxxxxxxxxxxxxxxxxxxx}
\begin{equation}\label{eq4.12}
\vert I_1^{(i)}(\star)\vert=\vert I_0^{(i)}(\star)\vert=\frac{C_{i+1}}{n},
\quad
C_{i+1}=\frac{C_i}{h_i},
\quad
h_i<c_2,
\end{equation}
where the constant $c_2=c_2(\eps)>0$ is as in Step~0.

%
%

\subsection{Case 2: density increase}\label{sec42}

Suppose that the inequality \eqref{eq4.10} holds.
Then an argument analogous to that in Step~0 between \eqref{eq2.39} and \eqref{eq2.55} and that in Step~1
between \eqref{eq3.13} and \eqref{eq3.14} leads to the existence of a subinterval $I_1^{(i)}(\star)\subset I_1^{(i)}$
satisfying $\vert I_1^{(i)}\vert/\vert I_1^{(i)}(\star)\vert=h_i$, where $h_i$ is an integer and
\begin{displaymath}
\frac{\vert I_1^{(i)}(\star)\cap\XXX_n\vert}{\vert I_1^{(i)}(\star)\vert}
\ge\left(1+\frac{c_1\eps}{12}\right)\frac{\vert I_1^{(i)}\cap\XXX_n\vert}{\vert I_1^{(i)}\vert},
\end{displaymath}
provided that
\textcolor{white}{xxxxxxxxxxxxxxxxxxxxxxxxxxxxxx}
\begin{equation}\label{eq4.13}
C_i\ge\frac{40b}{c_1\eps}
\quad\mbox{and}\quad
C_i\ge\frac{480(A+1)b^2}{(c_1\eps)^2}.
\end{equation}

To obtain a subinterval of $I_0^{(i)}$ of the same length as $I_1^{(i)}(\star)$, we next divide $I_0^{(i)}$ into $h_i$ equal parts,
and denote by $I_0^{(i)}(\star)$ one of these subintervals with the minimum intersection with the set~$\XXX_n$.
Then
\begin{displaymath}
\frac{\vert I_0^{(i)}(\star)\cap\XXX_n\vert}{\vert I_0^{(i)}(\star)\vert}\le\frac{\vert I_0^{(i)}\cap\XXX_n\vert}{\vert I_0^{(i)}\vert}.
\end{displaymath}
We have
\textcolor{white}{xxxxxxxxxxxxxxxxxxxxxxxxxxxxxx}
\begin{equation}\label{eq4.14}
\vert I_0^{(i)}(\star)\vert=\vert I_1^{(i)}(\star)\vert=\frac{C_{i+1}}{n},
\quad
C_{i+1}=\frac{C_i}{h_i},
\quad
h_i<c_2,
\end{equation}
where the constant $c_2=c_2(\eps)>0$ is as in Step~0.

%
%

\section{Iteration process: deriving a contradiction}\label{sec5}

We now attempt to derive the necessary contradiction.

Suppose first that Case~1 holds in Step~$i$.

Corresponding to \eqref{eq2.23} and \eqref{eq3.9}, we have the inequality
\begin{equation}\label{eq5.1}
\vert I^{(i)}_0(0)\cap\XXX_n\vert
\ge\left(1-\frac{3\eps}{4}\right)\vert I^{(i)}_0(0)\vert\frac{\vert I_1^{(i)}\cap\XXX_n\vert}{\vert I_1^{(i)}\vert},
\end{equation}
provided that \eqref{eq4.11} holds.

Clearly $I_0^{(i)}(0)\subset I_0^{(i)}$, so it follows from \eqref{eq4.7} that
\begin{equation}\label{eq5.2}
\vert I_0^{(i)}(0)\cap\XXX_n\vert\le\left(1-\frac{c_1\eps}{16}\right)^{i_1}\vert I_1^{(i)}\cap\XXX_n\vert.
\end{equation}
On the other hand, combining \eqref{eq4.8} with \eqref{eq5.1} leads to the inequality
\begin{equation}\label{eq5.3}
\vert I^{(i)}_0(0)\cap\XXX_n\vert
\ge c_1\left(1-\frac{3\eps}{4}\right)\vert I_1^{(i)}\cap\XXX_n\vert.
\end{equation}
Clearly \eqref{eq5.2} and \eqref{eq5.3} contradict each other if
\begin{displaymath}
\left(1-\frac{c_1\eps}{16}\right)^{i_1}<\frac{c_1}{2},
\end{displaymath}
noting that $0<\eps<1/2$.
This gives an upper bound $c_3=c_3(\eps)$ to~$i_1$, the number of times that Case~1 holds among the first $i$ steps.

Suppose next that Case~2 holds in Step~$i$.

Combining \eqref{eq2.1} and \eqref{eq4.6}, and noting that $\vert I_1\vert=C/n$, we have
\begin{equation}\label{eq5.4}
\vert I_1^{(i)}\cap\XXX_n\vert\ge\left(1+\frac{c_1\eps}{12}\right)^{i_2}\frac{n}{b}\vert I_1^{(i)}\vert.
\end{equation}
On the other hand, it follows from \eqref{eq2.19} that
\begin{equation}\label{eq5.5}
\vert I_1^{(i)}\cap\XXX_n\vert\le2(A+1)n\vert I_1^{(i)}\vert+2.
\end{equation}
Clearly \eqref{eq5.4} and \eqref{eq5.5} contradict each other if 
\begin{displaymath}
\left(1+\frac{c_1\eps}{12}\right)^{i_2}>4(A+1)b,
\end{displaymath}
provided that $C_i\ge1$.
This gives an upper bound $c_4=c_4(\eps)$ to~$i_2$, the number of times that Case~2 holds among the first $i$ steps.

If $i>c_3+c_4$, then neither Case~1 nor Case~2 in Step~$i$ can be valid.
To show that Case~B is impossible, it remains to analyze the various constants in our argument.

Recall that the constants $c_0=c_0(\PPP;\alpha)$ and $c_1=c_1(\PPP;\alpha)$ depend only on $\PPP$ and~$\alpha$,
and are independent of $n$, $C$ and~$\eps$, while the constant $A$ depends only on~$\alpha$,
and the constant $b$ depends only on~$\PPP$.
It remains to study the various constants $C$ and~$C_i$.
They are governed by the inequalities \eqref{eq2.24}, \eqref{eq2.38}, \eqref{eq2.44}, \eqref{eq2.54}, \eqref{eq2.57},
\eqref{eq3.10}, \eqref{eq3.12}, \eqref{eq3.14}, \eqref{eq3.15} and \eqref{eq4.11}--\eqref{eq4.14}.
Thus we need
\begin{equation}\label{eq5.6}
C_i=\frac{C}{h_0h_1,\ldots,h_{i-1}}
\ge\max\left\{\frac{96(A+1)^2c_0b}{\eps(1+\alpha^2)^{1/2}},\frac{40b}{c_1\eps},\frac{480(A+1)b^2}{(c_1\eps)^2},1\right\},
\end{equation}
and $h_i<c_2=c_2(\eps)$.

The iteration process must stop after at most $c_5=c_5(\eps)=c_3(\eps)+c_4(\eps)$ steps.
It follows that \eqref{eq5.6} is satisfied provided that $C$ is chosen sufficiently large in terms of $\PPP,\alpha$ and~$\eps$.

For convenience, let $C^*$ be sufficiently large so that \eqref{eq5.6} is satisfied for every real number $C$ satisfying
\textcolor{white}{xxxxxxxxxxxxxxxxxxxxxxxxxxxxxx}
\begin{displaymath}
C\ge C^*.
\end{displaymath}
%
%

%
%

\section{Proof of Theorem~\ref{thm1}}\label{sec6}

We have already shown that Case~B leads to a contradiction.
To complete the proof of Theorem~\ref{thm1}, it remains to investigate Case~A, when the inequality \eqref{eq2.2} holds.
We have the following almost trivial observation.

\begin{lem}\label{lem4}
Suppose that $J$ is a subinterval of any vertical edge of $\PPP$ with length $\vert J\vert\ge3C/n$,
where $C$ is an integer satisfying $C^*\le C<n$.
If \eqref{eq2.2} holds, then
\begin{equation}\label{eq6.1}
(1-\eps)\left(\frac{\vert J\vert}{\vert I_1\vert}-3\right)\vert I_1\cap\XXX_n\vert
\le\vert J\cap\XXX_n\vert
\le\left(\frac{\vert J\vert}{\vert I_1\vert}+3\right)\vert I_1\cap\XXX_n\vert.
\end{equation}
\end{lem}

\begin{proof}
Let $k=[n/C]$ denote the integer part of~$n/C$.
Then we can split any vertical edge of $\PPP$ into a union of $k$ special subintervals of length $C/n$
and an extra short interval with length $w$ satisfying $0\le w<C/n$ at the top end of the vertical edge.

Consider the unique integer $\ell_0$ that satisfies the inequalities
\begin{equation}\label{eq6.2}
\ell_0\le\frac{\vert J\vert}{C/n}=\frac{\vert J\vert}{\vert I_1\vert}<\ell_0+1.
\end{equation}
Then $J$ contains at least $\ell_0-2$ of these special subintervals of length~$C/n$.
Combining this observation with \eqref{eq2.2} and the second inequality in \eqref{eq6.2} leads to the lower bound
\begin{displaymath}
\vert J\cap\XXX_n\vert
\ge(\ell_0-2)\vert I_0\cap\XXX_n\vert
>\left(\frac{\vert J\vert}{\vert I_1\vert}-3\right)(1-\eps)\vert I_1\cap\XXX_n\vert.
\end{displaymath}
On the other hand, $J$ is covered by $\ell_0+2$ special subintervals of length $C/n$ and the extra short subinterval of length $w$
which is contained in a subinterval of the vertical edge of length $C/n$.
Combining this observation with the first inequality in \eqref{eq6.2} leads to the upper bound
\begin{displaymath}
\vert J\cap\XXX_n\vert
\le(\ell_0+3)\vert I_1\cap\XXX_n\vert
\le\left(\frac{\vert J\vert}{\vert I_1\vert}+3\right)\vert I_1\cap\XXX_n\vert.
\end{displaymath}
This completes the proof.
\end{proof}

Let $C_\eps=3C^*/\eps$.
Let $J$ be an interval on a vertical edge of $\PPP$ satisfying
\begin{displaymath}
\vert J\vert\ge\frac{C_\eps}{n}=\frac{3C^*}{\eps n}.
\end{displaymath}
Then $\vert J\vert=\LLLL/\eps n$ for some positive real number $\LLLL\in\Rr$.
Clearly there exists an integer $C\ge C^*$ such that $3(C-1)<\LLLL\le3C$, so that
\begin{equation}\label{eq6.3}
\vert J\vert=\frac{3C}{\eps^*n}=\frac{3\vert I_1\vert}{\eps^*}
\end{equation}
for some $\eps^*$ satisfying
\textcolor{white}{xxxxxxxxxxxxxxxxxxxxxxxxxxxxxx}
\begin{equation}\label{eq6.4}
\eps\le\eps^*\le2\eps.
\end{equation}
Making use of \eqref{eq6.3}, we see that
\begin{align}\label{eq6.5}
&
\left\vert V_n(J)-\frac{n\vert J\vert}{b}\right\vert
=\left\vert\vert J\cap\XXX_n\vert-\frac{n}{b}\frac{3\vert I_1\vert}{\eps^*}\right\vert
\nonumber
\\
&\quad
\le\left\vert\vert J\cap\XXX_n\vert-\frac{3}{\eps^*}\vert I_1\cap\XXX_n\vert\right\vert
+\frac{3\vert I_1\vert}{\eps^*}\left(\frac{\vert I_1\cap\XXX_n\vert}{\vert I_1\vert}-\frac{n}{b}\right).
\end{align}
With $\vert J\vert=3\vert I_1\vert/\eps^*$ in \eqref{eq6.1}, we have
\begin{displaymath}
\frac{3(1-\eps^*)^2}{\eps^*}\vert I_1\cap\XXX_n\vert
\le\vert J\cap\XXX_n\vert
\le\frac{3(1+\eps^*)}{\eps^*}\vert I_1\cap\XXX_n\vert,
\end{displaymath}
and this implies
\textcolor{white}{xxxxxxxxxxxxxxxxxxxxxxxxxxxxxx}
\begin{equation}\label{eq6.6}
\left\vert\vert J\cap\XXX_n\vert-\frac{3}{\eps^*}\vert I_1\cap\XXX_n\vert\right\vert\le6\vert I_1\cap\XXX_n\vert.
\end{equation}
On the other hand, it is clear from \eqref{eq2.1} and \eqref{eq2.2} that
\begin{equation}\label{eq6.7}
\frac{\vert I_1\cap\XXX_n\vert}{\vert I_1\vert}
\ge\frac{n}{b}
\ge\frac{\vert I_0\cap\XXX_n\vert}{\vert I_0\vert}
\ge(1-\eps^*)\frac{\vert I_1\cap\XXX_n\vert}{\vert I_1\vert}.
\end{equation}
It then follows from \eqref{eq6.7} that
\begin{displaymath}
\frac{\vert I_1\cap\XXX_n\vert}{\vert I_1\vert}-\frac{n}{b}
\le\frac{\vert I_1\cap\XXX_n\vert}{\vert I_1\vert}-\frac{\vert I_0\cap\XXX_n\vert}{\vert I_0\vert}
\le\frac{\vert I_1\cap\XXX_n\vert}{\vert I_1\vert}-(1-\eps^*)\frac{\vert I_1\cap\XXX_n\vert}{\vert I_1\vert},
\end{displaymath}
so that
\textcolor{white}{xxxxxxxxxxxxxxxxxxxxxxxxxxxxxx}
\begin{equation}\label{eq6.8}
\frac{3\vert I_1\vert}{\eps^*}\left(\frac{\vert I_1\cap\XXX_n\vert}{\vert I_1\vert}-\frac{n}{b}\right)\le3\vert I_1\cap\XXX_n\vert.
\end{equation}
It also follows from \eqref{eq6.7} that
\textcolor{white}{xxxxxxxxxxxxxxxxxxxxxxxxxxxxxx}
\begin{equation}\label{eq6.9}
\vert I_1\cap\XXX_n\vert\le\frac{\vert I_1\vert}{1-\eps^*}\frac{n}{b}.
\end{equation}
Substituting \eqref{eq6.6}, \eqref{eq6.8} and \eqref{eq6.9} into \eqref{eq6.5}, we conclude that
\begin{equation}\label{eq6.10}
\left\vert V_n(J)-\frac{n\vert J\vert}{b}\right\vert
\le\frac{3\eps^*}{1-\eps^*}\frac{n\vert J\vert}{b}
\le\frac{6\eps}{1-2\eps}\frac{n\vert J\vert}{b},
\end{equation}
in view of \eqref{eq6.4}.
Naturally, we may assume that $\eps<1/2$.
Since $n$ and $J$ are arbitrary, the inequality \eqref{eq6.10} proves super-micro-uniformity with $6\eps(1-2\eps)^{-1}$ instead of~$\eps$.

%
%

\end{document}